 \newtheorem{thm}{Theorem}[section]
 \newtheorem{corollary}[thm]{Corollary}
 \newtheorem{lemma}[thm]{Lemma}
 \newtheorem{Proposition}[thm]{Proposition}
 \theoremstyle{definition}
 \theoremstyle{remark}
 \newtheorem{example}{Example}
 \numberwithin{equation}{section}
 \newcommand{\R}{\mathbb{R}}
 \newcommand{\x}{\mathbf{x}}
  \renewcommand{\a}{\mathbf{a}}
   \newcommand{\w}{\mathbf{w}}
  \renewcommand{\c}{\mathbf{c}}
  \newcommand{\n}{\mathbf{n}}
   \newcommand{\q}{\mathbf{q}}
    \newcommand{\F}{\mathbf{F}}
 \newcommand{\fsep}{\hspace*{\fill}}
\begin{document}

%
%
%
%
%
%
%
%
%

\title[Improper Indefinite Affine Spheres]
 {A Geometric Representation of Improper \\ Indefinite Affine Spheres with Singularities}

\author[M.Craizer]{Marcos Craizer}

\address{%
Departamento de Matem\'{a}tica- PUC-Rio\br
Rio de Janeiro\br
Brazil}

\email{craizer@puc-rio.br}

\thanks{The first and second authors want to thank CNPq for financial support during the preparation of this manuscript.}
\author[R.C.Teixeira]{Ralph C. Teixeira}
\address{Departamento de Matem\'{a}tica Aplicada- UFF \br
Niter\'oi\br
Brazil}
\email{ralph@mat.uff.br}

\author[M.A.H.B.daSilva]{Moacyr A.H.B. da Silva}
\address{Centro de Matem\'{a}tica Aplicada- FGV \br
Rio de Janeiro\br
Brazil}
\email{moacyr@fgv.br}
\subjclass{ 53A15}

\keywords{ Improper affine spheres, Area distances, Affine symmetry sets}

\date{July 13, 2010}

\begin{abstract}
Given a pair of planar curves, one can define its generalized area distance,  a concept that generalizes the area distance of a single curve. 
In this paper, we show that the generalized area distance of a pair of planar curves is an improper indefinite affine spheres with singularities, and, reciprocally, 
every indefinite improper affine sphere in $\R^3$ is the generalized distance of a pair of planar curves. 
Considering this representation, the singularity set of the improper affine sphere corresponds to
the area evolute of the pair of curves, and this fact allows us to describe a clear geometric picture of the former.
Other symmetry sets of the pair of curves, like the affine area symmetry set and the affine envelope symmetry set can 
be also used to describe geometric properties of the improper affine sphere.
\end{abstract}

\maketitle

\section{Introduction}

The area based distance to a convex planar curve at a point $p$ is defined as the minimum area of a region bounded by the curve and a line through $p$  
and is a well known concept in computer vision  (\cite{Niet04}).  In this paper we consider a generalization of this distance to a pair of planar curves as follows: 
for a pair of parameterized planar curves $(\alpha(s),\beta(t))$, $s\in I_1$, $t\in I_2$, denote by $\x(s,t)$ the midpoint of the chord $\overline{\alpha(s)\beta(t)}$ 
and by $g(s,t)$ the area of the region bounded by the two curves, the 
chord $\overline{\alpha(s)\beta(t)}$ and some other arbitrary fixed chord. We shall call the map $(s,t)\to(\x(s,t),g(s,t))$ the {\it generalized area distance} of the pair of curves 
$(\alpha(s),\beta(t))$. When $\alpha$ and $\beta$ are parameterizations of the same convex curve without parallel tangents, the function $g$ is exactly the area distance of the curve (\cite{Craizer08}).

It turns out that any generalized area distance map is an improper indefinite affine sphere with singularities. Reciprocally, any improper 
indefinite affine sphere is obtained as a generalized distance of a pair of planar curves. This correspondence between generalized area distance 
and improper indefinite affine spheres is the main theme of this paper.

The singularity set of the generalized area distance is formed by the pairs $(s,t)$ whose corresponding velocity vectors $\alpha'(s)$ and $\beta'(t)$ are parallel.  
Thus it coincides with a well-known symmetry set associated with the curves $\alpha$ and $\beta$,
the {\it area evolute}, also called {\it midpoint parallel tangent locus} (\cite{Giblin08},\cite{Holtom00}).

Improper affine maps were introduced in \cite{Martinez05} for convex surfaces as a class of improper affine spheres with singularities. For non-convex surfaces,  a similar class of improper affine spheres with singularities was introduced in \cite{Nakajo09}, where they were called {\it improper indefinite affine maps}, shortly 
{\it indefinite IA-maps}. In our representation, elements of this class correspond to generalized area distances such that $\alpha$ and $\beta$ are regular curves, i.e., $\alpha'(s)\neq 0$, $s\in I_1$ and $\beta'(0)$, $s\in I_2$. 
The representation of \cite{Nakajo09} is based on two para-holomorphic functions, and it is 
easy to pass from them to the pair or planar curves of our representation and vice-versa. The advantage of working with our representation is that 
we gain a lot of geometric insight about the indefinite improper affine maps.
For example, the nature of the non-degenerate singular points of the indefinite IA-maps is studied in \cite{Nakajo09}, and we do a similar study here, but correlating with the geometry of the 
area evolute of the pair of planar curves. 



Besides the area evolute, there are some other well known symmetry sets associated with a pair of plane curves, namely,  the affine envelope symmetry set (AESS) 
and the affine area symmetry set (AASS) (\cite{Giblin98},\cite{Giblin08},\cite{Holtom00}). Each of these sets
can be transported to the surface by our representation formula. The AASS corresponds 
to the self-intersection set of the affine sphere, while the AESS corresponds to points of  local symmetry of the affine sphere.

The paper is organized as follows: in section 2 we review some well-known facts about affine immersions. In section 3, we describe the promised
representation formula. In section 4, we study the singularity set of the improper affine sphere while in section 5 we discuss the subsets of the affine sphere that correspond to the AESS and the AASS.


\section{ Asymptotic coordinates and improper affine spheres}

Consider an immersion $\q:I_1\times I_2\to\R^3$, where $I_1\subset\R$ and $I_2\subset\R$ are intervals. For $s\in I_1$, $t\in I_2$, we write
\begin{eqnarray*}
L&=&\left[\q_s,\q_t,\q_{ss}\right]\\
M&=&\left[\q_s,\q_t,\q_{st}\right]\\
N&=&\left[\q_s,\q_t,\q_{tt}\right],
\end{eqnarray*}
where subscripts denotes partial derivatives and $[\cdot,\cdot,\cdot]$ denotes determinant. We shall use the same notation $[\cdot,\cdot]$ for the determinant of two planar vectors. 
The surface $S=\q(I_1\times I_2)$ is convex if and only if $LN-M^2>0$. Along this paper we shall assume that $LN-M^2<0$, and 
thus we shall consider only non-convex surfaces. 
In case $L=N=0$, we say that $(s,t)$ are asymptotic coordinates of the surface $S$.

Assuming that $\q$ is an asymptotic parameterization, we may also assume $M>0$ by changing the roles of $s$ and $t$ if necessary. Writing $\Omega=\sqrt{M}$, the affine normal $\xi$ is given by
\begin{equation}\label{AffineNormal}
\xi(s,t)=\frac{\q_{st}}{\Omega}.
\end{equation}
We also write 
\begin{eqnarray*}
\q_{ss}&=&\frac{\Omega_s}{\Omega}\q_s+\frac{A}{\Omega}\q_t\\
\q_{tt}&=&\frac{D}{\Omega}\q_s+\frac{\Omega_t}{\Omega}\q_t,
\end{eqnarray*}
where $A$ and $D$ are coefficientes of the cubic form $C(s,t)$, i.e.,
\begin{equation*}
C(s,t)=Ads^3+Ddt^3
\end{equation*}
(for details see \cite{Buchin83}). 

An immersion $\q$ as above is called an improper affine sphere if the affine normal $\xi$ is a constant vector field. 
We shall denote by $\nu:I_1\times I_2\to \R^3_*$ the co-normal vector field of the immersion (see \cite{Nomizu94},\cite{LiSimon93}).

\section{Improper affine spheres associated with a pair of planar curves}

\subsection{Generalized area distance}

Consider two plane curves $\alpha(s)=(\alpha_1(s),\alpha_2(s))$ and $\beta(t)=(\beta_1(t),\beta_2(t))$, where $\alpha_i(s)$ and $\beta_i(t)$ are $C^{\infty}$-functions.
Define
\begin{eqnarray*}
\x(s,t)&=&\frac{1}{2}\left(\alpha(s)+\beta(t)\right)=\frac{1}{2}\left(\alpha_1(s)+\beta_1(t),\alpha_2(s)+\beta_2(t)\right)\\
\n(s,t)&=&\frac{1}{2}\left(\beta(t)-\alpha(s)\right)^{\perp}=\frac{1}{2}\left(\alpha_2(s)-\beta_2(t),\beta_1(t)-\alpha_1(s)\right),
\end{eqnarray*}
where the symbol $\perp$ means anticlockwise rotation of ninety degrees.  
Geometrically, the point $\x$ is the midpoint of the {\it chord} connecting $\alpha(s)$ with $\beta(t)$ and $\n$ is orthogonal to the same chord, with half of its length.

Define $g(s,t)$ by the relation $\nabla g=\n$, where the gradient is taken with respect to $\x$. 
\begin{lemma}
The function $g$ is well defined up to a constant.
\end{lemma}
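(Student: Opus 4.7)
The plan is to reinterpret $\nabla g = \n$ as the search for a primitive of a pulled-back 1-form on the parameter rectangle $I_1 \times I_2$, and then apply Poincar\'e's lemma. Writing the condition in $(s,t)$-coordinates amounts to solving the system $g_s = \n \cdot \x_s$, $g_t = \n \cdot \x_t$ (obtained by pulling back $n_1\, dx_1 + n_2\, dx_2$ via the map $\x$). Existence of a solution is equivalent to the 1-form $\omega := (\n \cdot \x_s)\, ds + (\n \cdot \x_t)\, dt$ being closed, and uniqueness modulo an additive constant follows since $I_1 \times I_2$ is connected.

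To verify closedness, I would first compute $\x_s = \tfrac{1}{2}\alpha'(s)$ and $\x_t = \tfrac{1}{2}\beta'(t)$, and then use the identity $\n \cdot \v = \tfrac{1}{2}[\beta - \alpha, \v]$, valid for any planar $\v$ since $\n$ is the half-rotation of $\beta - \alpha$ by $90^{\circ}$ and $\w^{\perp} \cdot \v = [\w, \v]$. This yields
\[
\omega \;=\; \tfrac{1}{4}\bigl[\beta(t) - \alpha(s),\, \alpha'(s)\bigr]\, ds \;+\; \tfrac{1}{4}\bigl[\beta(t) - \alpha(s),\, \beta'(t)\bigr]\, dt.
\]
Taking mixed partial derivatives, $\partial_t$ of the first coefficient gives $\tfrac{1}{4}[\beta'(t), \alpha'(s)]$, while $\partial_s$ of the second gives $-\tfrac{1}{4}[\alpha'(s), \beta'(t)]$; antisymmetry of the bracket makes the two equal, so $\omega$ is closed.

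Since $I_1 \times I_2$ is simply connected, Poincar\'e's lemma produces a smooth primitive $g$, unique up to an additive constant. I do not anticipate any serious obstacle: the entire proof reduces to the one-line bracket identity above. The only conceptual nuance worth flagging is that when $(s,t) \mapsto \x(s,t)$ fails to be injective (which will happen precisely on the eventual singularity set of the improper affine sphere), $g$ must be regarded as a function on the parameter domain rather than on $\R^2$; this is the reading consistent with the later interpretation of $g$ as the signed area between the two curves and a pair of chords.
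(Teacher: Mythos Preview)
Your proposal is correct and follows essentially the same route as the paper: both compute the would-be partials $g_s=\n\cdot\x_s=\tfrac14[\beta-\alpha,\alpha']$ and $g_t=\n\cdot\x_t=\tfrac14[\beta-\alpha,\beta']$ and verify that the mixed partials agree via the antisymmetry of the bracket, then invoke the standard exactness criterion on the simply connected rectangle. Your differential-forms language and your remark that $g$ lives on the parameter domain (rather than on the image of $\x$) make the argument slightly more explicit than the paper's, but the underlying computation is identical.
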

\begin{proof}
Assuming the existence of $g$ we calculate
\begin{eqnarray*}
g_s=\frac{1}{4}\left( (\alpha_2(s)-\beta_2(t))\alpha_1'(s)+ (\beta_1(t)-\alpha_1(s))\alpha_2'(s)  \right)=\frac{1}{4}\left[\beta(t)-\alpha(s),\alpha'(s)\right] \\
g_t=\frac{1}{4}\left( (\alpha_2(s)-\beta_2(t))\beta_1'(t)+ (\beta_1(t)-\alpha_1(s))\beta_2'(t)  \right)=\frac{1}{4}\left[\beta(t)-\alpha(s),\beta'(t)\right].
\end{eqnarray*}

\noindent
For the existence of $g$, we must prove that $(g_s)_t=(g_t)_s$, where $g_s$ and $g_t$ are defined above. But 
\begin{equation*}
(g_s)_t=(g_t)_s=\frac{1}{4}\left[\beta'(t),\alpha'(s)\right],
\end{equation*}
thus proving the lemma.
\end{proof}

We now give a geometric interpretation of $g(s,t)$. Let $\F=\frac{1}{2} (-y,x)$ denote a vector field in the plane whose line integral along a closed contour gives the  area of the region bounded 
by it. We fix a chord $L_0$ connecting $\alpha(0)$ with $\beta(0)$ and denote the line integral of $\F$ along $L_0$ by $C_0$. Then the signed area $A(s,t)$ of the region bounded by $L_0$,
$\alpha([0,s])$, the chord connecting $\alpha(s)$ with $\beta(t)$ and $\beta([t,0])$ (see figure \ref{AreaChords})  is given by
$$
2A(s,t)=C_0+\int_0^t\left[\beta(t),\beta'(t)\right]dt+\int_0^s\left[\alpha'(s),\alpha(s)\right]ds+\left[\beta(t),\alpha(s)\right].
$$
Thus 
\begin{eqnarray*}
2A_s&=&\left[\alpha'(s),\alpha(s)-\beta(t)\right]\\
2A_t&=&\left[\beta'(t),\alpha(s)-\beta(t)\right].
\end{eqnarray*}
We conclude that $g(s,t)=A(s,t)/2+C$, for some constant $C$. We call the map $(s,t)\to(\x(s,t),g(s,t))$ the {\it generalized area distance} of the pair of planar curves $(\alpha,\beta)$.

\begin{figure}[htb]
 \centering
 \includegraphics[width=0.45\linewidth]{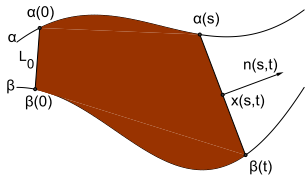}
 \caption{Generalized area distance of the pair of curves $(\alpha,\beta)$.}
\label{AreaChords}
\end{figure}

\subsection{ Improper indefinite affine spheres}

Consider the generalized area distance map $\q:I_1\times I_2\to\R^3$,
\begin{equation*}
\q(s,t)=\left(  \x(s,t), g(s,t)\right).
\end{equation*}
Then 
\begin{eqnarray*}
\q_s&=&\frac{1}{2}\left(  \alpha'(s),  \frac{1}{2}\left[  \beta(t)-\alpha(s),\alpha'(s)  \right]         \right)\\
\q_t&=&\frac{1}{2}\left( \beta'(t), \frac{1}{2} \left[ \beta(t)-\alpha(s),\beta'(t)  \right]         \right).
\end{eqnarray*}
Since
\begin{equation*}
\q_{ss}= \frac{1}{2} \left(    \alpha''(s),  \frac{1}{2}\left[  \beta(t)-\alpha(s),\alpha'(s)  \right]       \right),
\end{equation*}
we observe that $\q_{ss}$ is a linear combination of $\q_s$ and $\q_t$. This holds also for $\q_{tt}$, and so we conclude that $L=N=0$, i.e., $(s,t)$ are asymptotic coordinates. 
Since 
\begin{equation*}
\q_{st}=\left( 0, 0, g_{st}\right),
\end{equation*}
we obtain $M=\left(\frac{1}{4} \left[ \alpha'(s),\beta'(t) \right]\right)^2$ and thus
$$
\Omega(s,t)=\frac{1}{4} \left[ \alpha'(s),\beta'(t) \right]=g_{st}.
$$
Now equation \eqref{AffineNormal} implies that $\xi=(0,0,1)$ and so $\q$ is an improper affine sphere.

We also write
\begin{eqnarray*}
\q_{ss}&=& \frac{1}{\Omega} \left( \left[\alpha''(s),\beta'(t)\right]  \q_s+\left[ \alpha'(s),\alpha''(s)  \right]\q_t       \right)\\
\q_{tt}&=&\frac{1}{\Omega} \left( - \left[\beta'(t),\beta''(t)    \right]\q_s+\left[\alpha'(s),\beta''(t)  \right]\q_t       \right),
\end{eqnarray*}
and so the cubic form is given by
\begin{equation}\label{cubic}
C(s,t)=a(s)^3ds^3-b(t)^3dt^3,
\end{equation}
where 
$$
a(s)=\left[\alpha'(s),\alpha''(s)\right]^{1/3},\ \ \ \ b(t)=\left[\beta'(t),\beta''(t)\right]^{1/3}.
$$
Thus the cubic form vanishes in the direction of the line
$a(s)ds=b(t)dt$. One can easily verify that $\nu=(-\n,1)$ is the co-normal vector field of this immersion. In figure \ref{ASTwoCurves} one can see an improper 
indefinite affine sphere obtained from a pair of planar curves.

\begin{figure}[htb]
 \centering
 \includegraphics[width=0.75\linewidth]{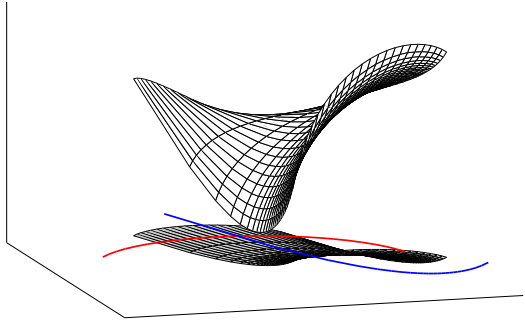}
 \caption{An improper indefinite affine sphere  obtained from two planar curves, together with its projection in the (x,y)-plane. }
\label{ASTwoCurves}
\end{figure}

\subsection{Generality of the construction}

In this section we show that any smooth improper indefinite affine sphere is the generalized area distance of  a pair of planar curves.

Consider a smooth improper affine sphere. We may assume that the constant affine normal is $\xi=(0,0,1)$ and write the affine immersion $q:I_1\times I_2\to\R^3$
in the form $q(s,t)=(\x(s,t),g(s,t))$, where $(s,t)$ are asymptotic coordinates of the immersion. Let $\n(s,t)=\nabla g(s,t)$ and define $\alpha(s,t)=\x(s,t)-\n(s,t)^{\perp}$, $\beta(s,t)=\x(s,t)+\n(s,t)^{\perp}$.

\begin{lemma}
$\alpha_t=\beta_s=0$.
\end{lemma}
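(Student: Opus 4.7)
The plan is to exploit the affine-normal assumption $\xi = (0,0,1)$ together with the asymptotic conditions $L = N = 0$ to pin down $\n_s$ and $\n_t$ completely. The affine-normal relation $\q_{st} = \Omega\xi = (0,0,\Omega)$ forces $\x_{st} = 0$, so $\x(s,t) = X(s) + Y(t)$, with $g_{st} = \Omega$. Expanding $M = [\q_s, \q_t, \q_{st}]$ along its bottom row and using $\Omega = \sqrt{M}$ yields $\Omega = [X'(s), Y'(t)]$. The definition $\n = \nabla_{\x} g$ gives $g_s = \n \cdot X'(s)$ and $g_t = \n \cdot Y'(t)$; differentiating the first in $t$ and the second in $s$ and using $\x_{st} = 0$ produces $\n_t \cdot X'(s) = \n_s \cdot Y'(t) = \Omega = [X', Y']$.

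To get the missing dot products $\n_s \cdot X'(s)$ and $\n_t \cdot Y'(t)$, I would expand $L = [\q_s, \q_t, \q_{ss}]$ along its third column, substitute $g_{ss} = \n_s \cdot X' + \n \cdot X''$, and apply the 2D triple-product identity $[a,b]c - [a,c]b + [b,c]a = 0$ dotted with $\n$, taking $a = X'$, $b = Y'$, $c = X''$. The three $\n$-terms collapse, leaving $L = \Omega(\n_s \cdot X'(s)) = 0$, so $\n_s \cdot X'(s) = 0$. The analogous expansion of $N = 0$ yields $\n_t \cdot Y'(t) = 0$.

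Since $\{X'(s), Y'(t)\}$ is a basis whenever $\Omega \neq 0$, and since $X'(s)^\perp$ agrees with $\n_s$ on both $X'(s)$ (giving $0$) and $Y'(t)$ (giving $[X', Y'] = \Omega$), we conclude $\n_s = X'(s)^\perp$; a symmetric argument gives $\n_t = -Y'(t)^\perp$. Then $\beta_s = \x_s + (\n_s)^\perp = X'(s) + (X'(s)^\perp)^\perp = X'(s) - X'(s) = 0$ and $\alpha_t = \x_t - (\n_t)^\perp = Y'(t) - (-Y'(t)^\perp)^\perp = Y'(t) - Y'(t) = 0$. The only delicate step is spotting the 2D triple-product identity that cancels every term in $L$ not involving $\n_s$; once that is in hand, the remainder is bookkeeping.
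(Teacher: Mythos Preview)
Your argument is correct. The reduction $\x_{st}=0$ from $\q_{st}=\Omega\,\xi=(0,0,\Omega)$ is valid, the expansion of $L$ along the third column together with the planar identity $[a,b]c-[a,c]b+[b,c]a=0$ does kill every term not containing $\n_s$, and the identification $\n_s=X'(s)^{\perp}$, $\n_t=-Y'(t)^{\perp}$ on the basis $\{X',Y'\}$ is legitimate wherever $\Omega\neq 0$.

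The paper's proof follows a different and somewhat more conceptual route. It never decomposes $\x$ as $X(s)+Y(t)$; instead it views the Hessian $D^{2}g$ as a self-adjoint linear operator on $\R^{2}$ and notes that $\n_s=D^{2}g(\x_s)$, $\n_t=D^{2}g(\x_t)$ by the chain rule. The asymptotic conditions $L=N=0$ are read as $D^{2}g(\x_s,\x_s)=D^{2}g(\x_t,\x_t)=0$, so $D^{2}g(\x_s)=\lambda\,\x_s^{\perp}$ and $D^{2}g(\x_t)=\mu\,\x_t^{\perp}$. Symmetry of the Hessian forces $\mu=-\lambda$, and the Monge--Amp\`ere relation $\det(D^{2}g)=-1$ for an improper affine sphere with $\xi=(0,0,1)$ (quoted from the literature) forces $\lambda=\pm 1$; after the usual orientation choice, $\lambda=1$. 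Then $\alpha_t=\x_t-(D^{2}g(\x_t))^{\perp}=0$ and $\beta_s=\x_s+(D^{2}g(\x_s))^{\perp}=0$ drop out immediately. So the paper trades your determinant computation and triple-product identity for the single external fact $\det(D^{2}g)=-1$; your version is more self-contained but longer, while the paper's is shorter but relies on that citation.
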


In the proof of this lemma, we shall denote by $D^2g:\R^2\times\R^2\to\R$ the hessian of $g$, which is a symmetric bilinear form. By fixing the inner product of $\R^2$, $D^2g$ can also be seen as a self-adjoint linear
operator of $\R^2$.

\begin{proof}
Observe that since $(s,t)$ are asymptotic directions $D^2g(\x_s)=\lambda\x_s^{\perp}$ and $D^2g(\x_t)=\mu\x_t^{\perp}$. Thus $D^2g(\x_s,\x_t)=\lambda[\x_s,\x_t]$ and $D^2g(\x_t,\x_s)=\mu[\x_t,\x_s]$. Since $D^2g$ is symmetric, $\mu=-\lambda$. 

For any improper indefinite affine sphere with $\xi=(0,0,1)$ we have that $\det(D^2g)=-1$ (\cite{LiSimon93}).  Thus $D^2g(\x_s,\x_t)=[\x_t,\x_s]$. This implies that 
$\lambda=\pm 1$. We shall assume that $\lambda=1$, otherwise we change the roles of $s$ and $t$. In this case,
$(D^2g(\x_s))^{\perp}=-\x_s$ and $(D^2g(\x_t))^{\perp}=\x_t$. 
Thus 
\begin{eqnarray*}
\alpha_t&=&\x_t-D^2g(\x_t)^{\perp}=0\\
\beta_s&=&\x_s+D^2g(\x_s)^{\perp}=0,
\end{eqnarray*}
thus proving the lemma.
\end{proof}

\begin{Proposition}
Every smooth improper indefinite affine sphere is the generalized area distance of a pair of planar curves. 
\end{Proposition}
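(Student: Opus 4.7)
The proposition is essentially a corollary of the preceding lemma, so the plan is short. The lemma produces two auxiliary maps $\alpha(s,t) = \x - \n^\perp$ and $\beta(s,t) = \x + \n^\perp$ satisfying $\alpha_t = 0$ and $\beta_s = 0$; the first step is simply to observe that this upgrades them to bona fide planar curves $\alpha : I_1 \to \R^2$ and $\beta : I_2 \to \R^2$, each depending on a single parameter.

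Next, I would verify that the construction of Section 3.1 applied to the pair $(\alpha,\beta)$ reproduces the given affine immersion $q = (\x,g)$. The identity $\x = \tfrac{1}{2}(\alpha+\beta)$ is immediate by adding the defining formulas, so the first two components of $q$ already match the midpoint of the chord. For the third component, applying $\perp$ to $\beta - \alpha = 2\n^\perp$ and using $\perp^2 = -\mathrm{Id}$ gives $\n = \tfrac{1}{2}(\alpha-\beta)^\perp$; after choosing the labeling of the two curves consistently with Section 3.1 (possibly swapping $\alpha$ and $\beta$ to absorb a sign), this is exactly the $\n$ attached to the pair $(\alpha,\beta)$ in the definition of the generalized area distance. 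Since $\nabla g = \n$ by construction, and since the first lemma of Section 3.1 shows that any such $g$ is determined up to an additive constant, $g$ coincides, up to a harmless vertical translation that preserves $\xi = (0,0,1)$, with the generalized area distance of $(\alpha,\beta)$.

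The only step requiring any care is keeping the sign and orientation conventions straight: whether $\beta - \alpha$ or $\alpha - \beta$ appears in the formula for $\n$ depends on which asymptotic direction was normalized in the proof of the lemma (the choice $\lambda = +1$ rather than $\lambda = -1$), and whether $\alpha$ or $\beta$ plays the role of the $s$-curve. These are inessential relabelings and can be absorbed by swapping $s \leftrightarrow t$ or $\alpha \leftrightarrow \beta$. Once this bookkeeping is done, no further computation is needed, and we conclude that every smooth improper indefinite affine sphere is the generalized area distance of the resulting planar pair.
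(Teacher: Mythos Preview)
Your proposal is correct and follows essentially the same route as the paper: use the lemma to conclude $\alpha=\alpha(s)$ and $\beta=\beta(t)$, then recover $\x=\tfrac12(\alpha+\beta)$ and $\n=\nabla g$ from the defining formulas, matching the data of Section~3.1. Your explicit treatment of the sign in $\n=\tfrac12(\alpha-\beta)^{\perp}$ versus $\tfrac12(\beta-\alpha)^{\perp}$, and of the additive constant in $g$, is in fact more careful than the paper's own proof, which silently absorbs these into the identification.
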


\begin{proof}
The above lemma implies that $\alpha=\alpha(s)$ and $\beta=\beta(t)$. Also, 
\begin{eqnarray*}
\x(s,t)&=&\frac{1}{2}\left(\alpha(s)+\beta(t)\right)\\
\nabla g(s,t)&=&\frac{1}{2}\left(\beta(t)-\alpha(s)\right),
\end{eqnarray*}
and thus $(\x(s,t),g(s,t))$ is the generalized distance of the pair $(\alpha(s),\beta(t))$. 
\end{proof}

\subsection{Properties of the representation}

One immediate property of the proposed representation formula for improper indefinite affine spheres is that it does not depend on the
parameterizations of the planar curves $\alpha$ and $\beta$. In fact, under a change of parameters of these curves, the map $q:I_1\times I_2\to \R^3$
changes but its image remains the same. 

Consider now an affine transformation of $\R^3$ that preserves the affine normal $\xi=(0,0,1)$. The corresponding transformation of the planar curves $\alpha$ and $\beta$ are the following: to a translation of $\R^3$ by a vector $(\w,w_3)$, the corresponding curves in the plane are translated by the vector $\w$. Similarly, for any affine transformation of $\R^3$ of the form $(\x,z)\to(A\x,z)$, where $A$ is an affine transformation of the plane, the corres\-ponding curves
are also transformed by $A$. The most interesting case occurs for affine transformations of $\R^3$ of the form $(\x,z)\to(\x, \a\cdot\x+z)$, where $\a$ is any vector of $\R^2$. In this case, the corresponding $\alpha$-curve
is translated by $-\a^{\perp}$ while the corresponding $\beta$-curve is translated by $\a^{\perp}$.

\section{Singularity set}



The singularity set $S$ of $\q$ consists of pairs $(s,t)$ for which $\Omega=0$, i.e., $[\alpha'(s),\beta'(t)]=0$ (see figure \ref{ASSingular}). 
Geometrically, the set $\x(S)$ consists of the midpoints of chords connecting $\alpha(s)$ and $\beta(t)$  with parallel tangents. In the theory of symmetry sets of planar curves, this set
is called area evolute, or  midpoint parallel tangent locus (\cite{Giblin08},\cite{Holtom00}). 
In case $\alpha(s)$ and $\beta(t)$ are parameterizations of the same planar curve without parallel tangents, the set $\x(S)$ is just the curve itself. 

\begin{figure}[htb]
 \centering
 \includegraphics[width=0.85\linewidth]{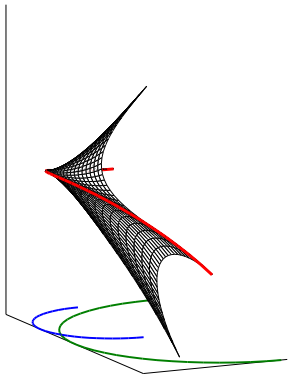}
 \caption{An affine sphere with singularities obtained from two arcs of circles. }
\label{ASSingular}
\end{figure}

\subsection{Indefinite IA-maps and non-degenerate singularities }

Folowing  \cite{Nakajo09}, the map $(s,t)\to\q(s,t)$  is called an {\it indefinite improper affine map}, or more concisely,  an {\it indefinite IA-map},
if the Lagrangian map $(s,t)\to(\x,\n)$  is an immersion. 
Since
\begin{eqnarray*}
(\x_s,\n_s)&=&\frac{1}{2}\left( \alpha'(s), - \alpha'(s)^{\perp}    \right)\\
(\x_t,\n_t)&=&\frac{1}{2}\left( \beta'(t), \beta'(t)^{\perp}       \right),
\end{eqnarray*}
we conclude that $(s,t)\to\q(s,t)$ is an indefinite IA-map if and only if 
\begin{equation}\label{condIAmaps}
\alpha'(s)\neq 0,\ s\in I_1\ \ \ {\text and}\ \ \ \beta'(t)\neq 0,\ t\in I_2. 
\end{equation}
From now on we shall always assume that the parameterized curves $\alpha(s)$ and $\beta(t)$ are regular, i.e., $\alpha'(s)\neq 0$, $s\in I_1$ and $\beta'(t)\neq 0$, $t\in I_2$.

A singularity is {\it non-degenerate} if $(\Omega_s,\Omega_t)\neq (0,0)$ (see \cite{Nakajo09}). We have that
\begin{eqnarray*}
\Omega_s&=& [\alpha''(s),\beta'(t)]\\
\Omega_t&=& [\alpha'(s),\beta''(t)].
\end{eqnarray*}
The condition $\Omega_t=0$ at a singular point implies that $\beta(t)$ is an inflection. Thus
a singularity is degenerate if and only if  $\alpha(s)$ and $\beta(t)$ are inflection points. 
The existence of pairs $(s,t)$ corresponding to inflection points of the curves and such that $\alpha'(s)$ is parallel to $\beta'(t)$ is  non-generic phenomena and, from now on, we shall
assume that it does not occur. 

To resume, we shall assume that \eqref{condIAmaps} holds and that there are not any pair of inflection points $(\alpha(s),\beta(t))$ with 
parallel tangents, which is equivalent to assume that the map $(s,t)\to\q(s,t)$ is an indefinite IA-map with non-degenerate singular set.
Under these assumptions, the singularity set $S$ in the $(s,t)$-plane is always smooth and the cubic form $C(s,t)$
of the immersion $(s,t)\to\q(s,t)$ defined by equation \eqref{cubic} is non-zero at any singular point.

\subsection{Classification of the singular points}

Parameterize the singular set $S$ by $r\to\gamma(r)=(s(r),t(r))$ and denote by $\eta(r)$ the null direction of $d\x$. In order to verify the 
regularity of $\x(S)$ at a point $\x(\gamma(r_0))=\x(s_0,t_0)$, we must check 
whether or not $\eta(r_0)$ is parallel to $\gamma'(r_0)$. Equivalently, we must verify whether or not 
$$
d\x\cdot(\Omega_t,-\Omega_s)= \Omega_t\alpha'(s_0)-\Omega_s\beta'(t_0)
$$
is the null vector. 


We have the following lemma:
\begin{lemma}\label{regularsingular}
The following statements are equivalent:
\begin{enumerate}
\item  The affine tangent vectors of $\alpha$ at $s_0$ and $\beta$ at $t_0$ are opposite, i.e., $b(t_0)\alpha'(s_0)+a(s_0)\beta'(t_0)=0$.
\item The euclidean curvatures $k_1$ of $\alpha$ at $s_0$ and $k_2$ of $\beta$ at $t_0$ are equal. 
\item The null direction $\eta$ of $d\x$ is tangent to $S$, i.e., $\Omega_t\alpha'(s_0)-\Omega_s\beta'(t_0)=0$.
\item The direction $(b(t_0),a(s_0))$ that vanishes the cubic form is tangent to $S$. 
\end{enumerate}
Any point $(s,t)\in S$ that does not satisfy these conditions will be called a {\it regular singular point}. 
\end{lemma}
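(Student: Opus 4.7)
The plan is to choose coordinates adapted to the singularity and translate each of the four statements into an algebraic relation among the scalars $a=a(s_0)$, $b=b(t_0)$, and an auxiliary parameter $\lambda$ encoding the tangency, then verify that all four relations coincide. The reduction begins with the observation that $[\alpha'(s_0),\beta'(t_0)]=0$, combined with the regularity $\alpha'(s_0),\beta'(t_0)\neq 0$, produces a unique $\lambda\in\R^*$ with $\beta'(t_0)=\lambda\alpha'(s_0)$.

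My first computation would pin down $\Omega_s$ and $\Omega_t$ at $(s_0,t_0)$. Differentiating $\Omega=\tfrac{1}{4}[\alpha'(s),\beta'(t)]$ and substituting $\beta'(t_0)=\lambda\alpha'(s_0)$ yields $\Omega_s=\tfrac{\lambda}{4}[\alpha''(s_0),\alpha'(s_0)]=-\lambda a^3/4$, while substituting $\alpha'(s_0)=\lambda^{-1}\beta'(t_0)$ yields $\Omega_t=\tfrac{1}{4\lambda}[\beta'(t_0),\beta''(t_0)]=b^3/(4\lambda)$. These two identities drive the rest of the argument.

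Next I would translate each condition into a scalar equation. Statement (1) reads $(b+a\lambda)\alpha'(s_0)=0$, equivalent to $b+\lambda a=0$. Statement (3) becomes $(\Omega_t-\lambda\Omega_s)\alpha'(s_0)=0$, that is, $\Omega_t=\lambda\Omega_s$; substituting the formulas above yields $b^3+\lambda^3 a^3=0$, equivalent to $b+\lambda a=0$ since the real cube root is a bijection, so (1) and (3) coincide. For (2), writing $k_1=a^3/|\alpha'|^3$, $k_2=b^3/|\beta'|^3$ and using $|\beta'|=|\lambda||\alpha'|$ reduces $k_1=k_2$ to $b=|\lambda|a$, which matches (1) under the orientation convention making the two tangent vectors antiparallel along the chord ($\lambda<0$). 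For (4), the unique real null direction of $C=a^3\,ds^3-b^3\,dt^3$ is $(b,a)$; its tangency to $S$ amounts to $b\Omega_s+a\Omega_t=0$, and the substitution factors as $(ab/\lambda)(b-\lambda a)(b+\lambda a)=0$, so under the same orientation convention, which excludes the spurious branch $b=\lambda a$, the condition reduces to $b+\lambda a=0$.

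The main obstacle I anticipate is the bookkeeping of signs. The curvature computation in (2) naturally involves $|\lambda|$, and the null-direction computation in (4) naturally produces two candidate roots $b=\pm\lambda a$; reconciling these with the signed equation $b=-\lambda a$ from (1) and (3) requires fixing a geometric convention, for instance orienting the curves so that the two tangent vectors point oppositely along the chord, and choosing the real cube roots $a,b$ consistently with this. Once this convention is in place, all four statements become literally the single equation $b+\lambda a=0$, and the equivalences follow at once.
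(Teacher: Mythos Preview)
Your approach is essentially the paper's, executed in arbitrary parameters. The paper first disposes of the inflection case (if $a(s_0)=0$ or $b(t_0)=0$, all four statements fail), then reparametrizes $\alpha$ and $\beta$ by affine arc length so that $a\equiv b\equiv 1$, writes $\alpha'(s_0)=-\lambda\beta'(t_0)$, and reduces each condition to a statement about the single scalar $\lambda$: (1) becomes $\lambda=1$, (3) becomes $\lambda^2-1/\lambda=0$, and (4) becomes $(\lambda,1/\lambda)\parallel(1,1)$. Your route keeps $a,b$ general and arrives at the analogous relations; your equivalence (1)$\Leftrightarrow$(3) via $b^3+\lambda^3a^3=0\Leftrightarrow b+\lambda a=0$ is clean and matches the paper exactly. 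The normalization buys the paper cleaner formulas; your version buys not having to justify that the reparametrization is harmless.

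The sign obstacle you anticipate is genuine, and invoking ``an orientation convention making $\lambda<0$'' is not a legitimate step: the sign of $\lambda$ is determined by the geometry at $(s_0,t_0)$, not chosen. Your condition (4) honestly factors as $b=\pm\lambda a$, and your condition (2) gives $b=|\lambda|a$; neither collapses to $b=-\lambda a$ without further argument. The paper's proof has the very same gap after its normalization: $(\lambda,1/\lambda)\parallel(1,1)$ yields $\lambda^2=1$, not $\lambda=1$ as asserted, and (1)$\Leftrightarrow$(2) is dismissed as ``an easy exercise'' with the same hidden $\lambda=\pm1$ ambiguity. So your (1)$\Leftrightarrow$(3) is complete; the spurious root in (2) and (4) is a loose end shared by the paper's own argument, not an artifact of your method.
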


\begin{proof}
We first observe that if $s_0$ corresponds to a inflection point of $\alpha$, then all three statements are false. This is a direct consequence of the fact that, at an inflection 
point of $\alpha$, $\Omega_s=0$. A similar fact holds for $\beta(t_0)$, and thus we may assume that neither $\alpha$ nor $\beta$ has inflection points. 
Thus we may assume $\alpha$ and $\beta$ are parameterized by affine arc length. In this case the direction that vanishes the cubic form is $(1,1)$
and the affine tangents are $\alpha'(s_0)$ and $\beta'(t_0)$. Under this simplification, the equivalence between (1) and (2) is an easy exercise.

\noindent
Writing $\alpha'(s_0)=-\lambda\beta'(t_0)$, we have $\eta=(1,\lambda)$ and so 
 $$
 \Omega_s=[\alpha''(s_0),\beta'(t_0)]=\frac{1}{\lambda}[\alpha'(s_0),\alpha''(s_0)]=\frac{1}{\lambda}. 
 $$
 Similarly, $\Omega_t=-\lambda$. Thus 
 $$
 \Omega_t\alpha'(s_0)-\Omega_s\beta'(t_0)=\left(\lambda^2-\frac{1}{\lambda}\right)\beta'(t_0).
 $$
We conclude that $\eta=(\Omega_t,-\Omega_s)$ if and only if $\lambda=1$. In other words, 
(1) and (3) are equivalent. Also, $\gamma'(r_0)$ is parallel to
$(-\Omega_t,\Omega_s)=(\lambda,\frac{1}{\lambda})$, and thus is also parallel to $(1,1)$ if and only if $\lambda=1$. We conclude that (4) is equivalent to (1).
\end{proof}

We shall use here proposition 1.3 of \cite{Kokubu05}, that we transcribe below for the reader's convenience:

\begin{Proposition}\label{Kokubu}
Let $\gamma(r_0)$ be a nondegenerate singularity of a front $q:I_1\times I_2\to\R^3$. Then
\begin{enumerate}
\item The germ of $q$ at $\gamma(r_0)$ is locally diffeomorphic to a cuspidal edge if and only if $\eta(r_0)$ is not parallel to $\gamma'(r_0)$.
\item The germ of $q$ at $\gamma(r_0)$ is locally diffeomorphic to a swallowtail if and only if $\eta(r_0)$ is parallel to $\gamma'(r_0)$ and
$A'(r_0)\neq 0$, where 
$$
A(r)=\det(\eta(r),\gamma'(r)).
$$ 
\end{enumerate}
\end{Proposition}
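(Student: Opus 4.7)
The approach is to apply the singularity theory of wave fronts and to match $q$ with the standard normal forms: the cuspidal edge $(u,r)\mapsto(u^2,u^3,r)$ and the swallowtail $(u,r)\mapsto(3u^4+u^2 r,\,4u^3+2ur,\,r)$. The plan is to read off which normal form governs the germ of $q$ at $\gamma(r_0)$ from the relative position of the null direction $\eta$ and the singular curve $\gamma$.

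First I would choose local coordinates $(u,r)$ near $r_0$ in the parameter plane $I_1\times I_2$ so that the singular set is $\{u=0\}$, the coordinate $r$ agrees with the given parameter of $\gamma$, and $\partial_u|_{u=0}$ is an extension of $\eta$. Non-degeneracy together with $\eta\in\ker dq$ forces $q_u\equiv 0$ on the singular set and $q_{uu}(0,r_0)\neq 0$, so
\[
q(u,r)=q(0,r)+\tfrac{u^2}{2}q_{uu}(0,r)+\tfrac{u^3}{6}q_{uuu}(0,r)+O(u^4).
\]

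For statement (1), if $\eta(r_0)$ is not parallel to $\gamma'(r_0)$ then $q_r(0,r_0)\neq 0$ and is linearly independent from $q_{uu}(0,r_0)$. An affine change in the target and a smooth reparametrization in the source match $q$ to the required order with $(u^2,u^3,r)$, and the standard $\mathcal{A}$-finite determinacy of the cuspidal edge (as used in \cite{Kokubu05}) promotes this matching to a genuine local diffeomorphism.

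For statement (2), if $\eta(r_0)$ is parallel to $\gamma'(r_0)$ then $q_r(0,r_0)=0$, so the singular curve $r\mapsto q(0,r)$ has a singularity at $r_0$. The hypothesis $A'(r_0)\neq 0$, with $A(r)=\det(\eta(r),\gamma'(r))$, expresses that this tangency is first-order, so the singular curve is an ordinary cusp; this is the algebraic condition ensuring that a generating family for the Legendrian lift of $q$ has an $A_3$ singularity, i.e.\ the swallowtail. The main obstacle is justifying this last equivalence cleanly: one needs either to unfold $q$ as a one-parameter family of plane-curve singularities and invoke the $A_k$ classification, or to construct an explicit diffeomorphism to the swallowtail via the implicit function theorem applied to the equation $q_u=0$, both routes being followed in \cite{Kokubu05}.
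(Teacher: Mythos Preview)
The paper does not prove this proposition at all: it is explicitly transcribed from \cite{Kokubu05} (Proposition~1.3 there) and used as a black box in the subsequent corollaries. So there is no proof in the paper to compare your attempt against.

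Your sketch is a reasonable outline of how the result is actually established in \cite{Kokubu05}: adapt coordinates so that the singular set is a coordinate axis and the null direction is $\partial_u$, Taylor-expand, and then invoke $\mathcal{A}$-determinacy of the normal forms. A couple of points where the sketch is thin: in case~(1) you assert that $q_r(0,r_0)$ and $q_{uu}(0,r_0)$ are linearly independent, but this is exactly where the \emph{front} hypothesis (immersivity of $(q,\nu)$) must be used, and you do not invoke it; likewise in case~(2) the passage from ``$A'(r_0)\neq 0$ means first-order tangency'' to ``the generating family has an $A_3$ singularity'' is the heart of the matter and would need to be spelled out. These are precisely the computations carried out in \cite{Kokubu05}, which is why the present paper simply cites that reference rather than reproducing them.
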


Following \cite{Nakajo09}, a singularity $(s_0,t_0)$ is called is called a {\it front} if  
the map $(s,t)\to (\q,\nu)$ is an immersion at $(s_0,t_0)$. One can easily verify that this is equivalent to 
the Lagrangian map $(s,t)\to(\x,\n)$ being an immersion at the same point. So every singularity of an indefinite IA-map
is a front singularity, and thus we are able to apply proposition \ref{Kokubu}. 

\begin{corollary}\label{cuspidal}
Assume that $(s_0,t_0)$ is a regular singular point. Then $\x(S)$ is smooth at $(s_0,t_0)$ and the germ of singularity $q(s_0,t_0)$ is 
diffeomorphic to a cuspidal edge.
\end{corollary}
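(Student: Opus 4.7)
The plan is to reduce the corollary to a direct application of Proposition \ref{Kokubu}(1), after verifying its hypotheses and extracting the smoothness of $\x(S)$ as a byproduct.

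First, I would unpack what it means for $(s_0,t_0)$ to be a regular singular point. By Lemma \ref{regularsingular} (negating condition (3)), this is exactly the statement that $\eta(r_0)$ is not parallel to $\gamma'(r_0)$, where $\gamma$ parameterizes $S$ and $\eta$ spans the null direction of $d\x$. I would record this explicitly, because it is precisely the hypothesis needed in case (1) of Proposition \ref{Kokubu}.

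Next, I would establish that $\x(S)$ is smooth at the image point. Since $\eta(r_0)$ spans $\ker d\x(\gamma(r_0))$ and $\gamma'(r_0)$ is not proportional to $\eta(r_0)$, we have $d\x\bigl(\gamma'(r_0)\bigr)\neq 0$. Therefore $r\mapsto \x(\gamma(r))$ is an immersion near $r_0$, so its image $\x(S)$ is locally a smooth curve at $\x(\gamma(r_0))$.

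Finally, I would check the remaining hypotheses of Proposition \ref{Kokubu} and apply part (1). The standing assumptions of the section give that $\q$ is an indefinite IA-map and that every singularity is non-degenerate; the paper has already noted (just before Corollary \ref{cuspidal}) that every singularity of an indefinite IA-map is a front singularity. Thus all hypotheses of Proposition \ref{Kokubu} hold at $\gamma(r_0)$, and the non-parallelism of $\eta(r_0)$ and $\gamma'(r_0)$ established in the first step places us in case (1). The conclusion is that $\q$ is locally diffeomorphic to a cuspidal edge at $(s_0,t_0)$.

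The only real content is the translation step, that is, recognizing that the geometric characterizations of a regular singular point in Lemma \ref{regularsingular} are the same non-parallelism condition appearing in Proposition \ref{Kokubu}; after that, nothing new needs to be computed. No significant obstacle is anticipated, since the heavy lifting (the classification of front singularities and the non-degeneracy/front properties of indefinite IA-maps) is imported from \cite{Kokubu05} and \cite{Nakajo09}.
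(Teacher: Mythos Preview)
Your proposal is correct and follows essentially the same route as the paper: negate condition (3) of Lemma~\ref{regularsingular} to get $\eta(r_0)\nparallel\gamma'(r_0)$, deduce smoothness of $\x(S)$ from $d\x(\gamma'(r_0))\neq 0$, and then invoke Proposition~\ref{Kokubu}(1) together with the already-established front and non-degeneracy properties. The paper's version is terser and adds the incidental remark that the tangent to $\x(S)$ is parallel to $\alpha'(s)$ and $\beta'(t)$, but the argument is the same.
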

\begin{proof}
At points where $\eta$ is transversal to $S$, the singularity set $\x(S)$ is smooth. In fact, one easily verifies that the tangent to $\x(S)$ at $\x(s,t)$ is parallel
to $\alpha'(s)$ and $\beta'(t)$. Now, using item (3) of the lemma \ref{regularsingular} and proposition \ref{Kokubu}(a), the germ of $q(s,t)$ at $\q(s_0,t_0)$ is diffeomorphic to a cuspidal edge.
\end{proof}

Consider a singularity $(s_0,t_0)$ associated with the parameter $r_0$ such that the conditions of lemma \ref{regularsingular} holds. We may assume that, 
close to $(s_0,t_0)$, $\alpha(s)$ and $\beta(t)$ are parameterized by affine arc-length. Define $\lambda(r)$ by the equation $\alpha'(r)+\lambda(r)\beta'(r)=0$. 
 
\begin{lemma}\label{singularsingular}
The following statements are equivalent:
\begin{enumerate}
\item $\lambda'(r_0)\neq 0$.
\item $k_1'(r_0)\neq k_2'(r_0)$.
\item $A'(r_0)\neq 0$.
\end{enumerate}
\end{lemma}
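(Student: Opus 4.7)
The strategy is to show that each of (1), (2), (3) amounts, up to a nonzero multiplicative factor, to the single scalar $\lambda'(r_0)$, so that the three non-degeneracy conditions all collapse to $\lambda'(r_0)\neq 0$. Since the hypotheses of Lemma~\ref{regularsingular} hold at $(s_0,t_0)$, neither curve has an inflection there, so I reparameterize $\alpha$ and $\beta$ by affine arc length, giving $[\alpha',\alpha'']=[\beta',\beta'']=1$. Item~(4) of that same lemma forces $\gamma'(r_0)\propto(1,1)$, so both components are nonzero and I may use $r=s$ as a local parameter on $S$, writing $\gamma(s)=(s,t(s))$ and $\alpha'(s)=-\lambda(s)\beta'(t(s))$ with $\lambda(s_0)=1$. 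Each ``$\cdot\neq 0$'' condition in (1)--(3) is invariant under regular reparameterization of $\gamma$ (the underlying quantity vanishes at $r_0$, so reparameterization only rescales its derivative by a nonzero factor), which justifies this choice. Differentiating the defining relation $\alpha'(s)+\lambda(s)\beta'(t(s))=0$ with respect to $s$ and wedging with $\beta'$, then using the affine arc-length normalizations, gives $t'(s)=1/\lambda(s)^2$; in particular $t'(s_0)=1$.

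For (1)~$\Leftrightarrow$~(2), I take Euclidean norms in $\alpha'(s)=-\lambda(s)\beta'(t(s))$. Under affine arc length the Euclidean curvature of a plane curve is $k=[\alpha',\alpha'']/|\alpha'|^3=1/|\alpha'|^3$, so the norm identity is equivalent to $k_1(s)=\lambda(s)^{-3}k_2(t(s))$. Differentiating at $s_0$ and using $\lambda(s_0)=1$ and $t'(s_0)=1$ produces
\[ k_1'(r_0)-k_2'(r_0)=-3\lambda'(r_0)\,k_2(t_0), \]
and since the no-inflection assumption gives $k_2(t_0)\neq 0$, conditions (1) and (2) are equivalent.

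For (1)~$\Leftrightarrow$~(3), I write $\eta$ and $\gamma'$ explicitly on $S$. The vector $(1,\lambda(s))$ annihilates $d\x=\tfrac{1}{2}(\alpha'(s),\beta'(t(s)))$, so one may take $\eta(s)=(1,\lambda(s))$, and by the previous paragraph $\gamma'(s)=(1,1/\lambda(s)^2)$. A direct determinant computation yields $A(s)=(1-\lambda(s)^3)/\lambda(s)^2$, whose derivative at $\lambda(s_0)=1$ equals $-3\lambda'(r_0)$, proving (1)~$\Leftrightarrow$~(3). The only mildly delicate point is verifying parameterization invariance in the first paragraph; once the two identities $k_1=\lambda^{-3}k_2$ and $t'=1/\lambda^2$ are in hand, the rest is routine algebra.
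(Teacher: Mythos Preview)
Your proof is correct and follows essentially the same route as the paper's. The only cosmetic difference is that you fix the concrete parameter $r=s$ on $S$ and derive $t'(s)=1/\lambda(s)^2$ explicitly, whereas the paper keeps an abstract parameter $r$, writes $\gamma'(r)=c(r)\bigl(\lambda(r),1/\lambda(r)\bigr)$ with $c(r_0)\neq 0$, and lets that factor absorb the reparameterization; both lead to the same identities $k_1=\lambda^{-3}k_2$ and $A'(r_0)=-3\,(\text{nonzero})\,\lambda'(r_0)$.
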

\begin{proof}
We have that 
$$
k_1(r)=\frac{1}{\lambda^3(r)}k_2(r)
$$ 
and thus 
$$
k_1'(r_0)=k_2'(r_0)-3\lambda'(r_0)k_2(r_0).
$$
Thus (1) and (2) are equivalent.
Since $\eta(r)=(1,\lambda(r))$ and $\gamma'(r)$ is parallel to $(\lambda(r),\frac{1}{\lambda(r)})$, we obtain that
$$
A(r)=c(r)\left(\frac{1}{\lambda(r)}-\lambda^2(r)\right),
$$
where $c(r_0)\neq 0$. So 
$$
A'(r_0)=-3c(r_0)\lambda'(r_0),
$$
and thus the equivalence between (1) and (3) is proved.
\end{proof}

\begin{corollary}\label{swallow}
Suppose that $(s_0,t_0)\in S$ is not regular and the conditions of lemma \ref{singularsingular} hold.
Then $\x(S)$ has an ordinary cusp at $(s_0,t_0)$ and the germ of $q(s,t)$ at $\q(s_0,t_0)$ is diffeomorphic to a swallowtail. 
\end{corollary}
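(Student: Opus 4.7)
The statement has two claims: that $q$ has a swallowtail at $\q(s_0,t_0)$ and that $\x(S)$ has an ordinary cusp at $(s_0,t_0)$. I would prove them in that order.

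\textbf{Swallowtail.} By hypothesis $(s_0,t_0)$ is not a regular singular point, so by item (3) of Lemma \ref{regularsingular} we have $\eta(r_0) \parallel \gamma'(r_0)$. The hypothesis that the conditions of Lemma \ref{singularsingular} hold gives, by its equivalence (3), $A'(r_0)\neq 0$. The text has already noted that every singularity of an indefinite IA-map is a front singularity, so Proposition \ref{Kokubu}(b) applies directly and identifies the germ of $q$ at $\q(s_0,t_0)$ with a swallowtail.

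\textbf{Ordinary cusp.} The plan is to reparameterize $\alpha,\beta$ by affine arclength so that $a\equiv b\equiv 1$; condition (1) of Lemma \ref{regularsingular} then reads $\alpha'(s_0)+\beta'(t_0)=0$, and I set $v := \alpha'(s_0)=-\beta'(t_0)$ and note $\lambda(r_0)=1$. Parameterize $S$ near $r_0$ by a regular curve $\gamma(r)=(s(r),t(r))$ with $s'(r_0)=t'(r_0)=1$, which is allowed because $\gamma'(r_0)\parallel\eta(r_0)=(1,1)$. Then $(\x\circ\gamma)'(r_0)=\tfrac12(\alpha'(s_0)+\beta'(t_0))=0$.

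The crux is to show that $(\x\circ\gamma)''(r_0)$ is nonzero and parallel to $v$, while $(\x\circ\gamma)'''(r_0)$ has a nonzero component along $v^\perp$. Three identities at $r_0$ do the work for the second derivative: (i) differentiating the defining relation $\alpha'(s(r))+\lambda(r)\beta'(t(r))=0$ yields $\alpha''(s_0)+\beta''(t_0)=\lambda'(r_0)\,v$; (ii) in affine arclength, $\alpha''' \parallel \alpha'$ and $\beta''' \parallel \beta'$ force $\Omega_{ss}(s_0,t_0)=\Omega_{tt}(s_0,t_0)=0$, while a short calculation in the basis $\{v,v^\perp\}$ gives $\Omega_{st}(s_0,t_0)=-\lambda'(r_0)/4$; (iii) differentiating $\Omega(s(r),t(r))\equiv 0$ twice and inserting these values gives $s''(r_0)-t''(r_0)=2\lambda'(r_0)$. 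Substituting into
\[
(\x\circ\gamma)''(r_0) \;=\; \tfrac12\bigl(\alpha''(s_0)+\beta''(t_0)\bigr) + \tfrac12\,v\bigl(s''(r_0)-t''(r_0)\bigr)
\]
produces $(\x\circ\gamma)''(r_0) = \tfrac32\,\lambda'(r_0)\,v$, nonzero by Lemma \ref{singularsingular}(1). For the third derivative, the same ingredients show that the $v^\perp$-component of $(\x\circ\gamma)'''(r_0)$ picks up the term $\tfrac{3}{2|v|^2}(s''(r_0)-t''(r_0)) = \tfrac{3\lambda'(r_0)}{|v|^2}\neq 0$, coming from $\tfrac32(\alpha''(s_0)s''(r_0)+\beta''(t_0)t''(r_0))$; the other contributions lie along $v$ because $\alpha''',\beta'''\parallel v$. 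Hence $(\x\circ\gamma)''(r_0)$ and $(\x\circ\gamma)'''(r_0)$ are linearly independent, which is the criterion for an ordinary cusp.

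The main technical obstacle is arranging the second-derivative computation so that the three cancellations (the $v^\perp$-components of $\alpha''+\beta''$ cancelling in affine arclength, $\Omega_{ss}|_0=\Omega_{tt}|_0=0$, and $s''-t''$ acquiring $\lambda'(r_0)$ as a factor) line up to produce the clean identity $\tfrac32\lambda'(r_0)\,v$. Affine arclength is essential; without it the formulas remain opaque and the common factor $\lambda'(r_0)$ is invisible.
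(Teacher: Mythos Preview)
Your swallowtail argument is identical to the paper's: it also invokes condition (3) of Lemma \ref{singularsingular} and applies Proposition \ref{Kokubu}(b).

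For the ordinary cusp, the paper and you take genuinely different routes. The paper simply cites an external result: it invokes condition (2) of Lemma \ref{singularsingular}, namely $k_1'(r_0)\neq k_2'(r_0)$, and appeals to Proposition 2.4.9 of Holtom's thesis \cite{Holtom00}, where the cusp criterion for the midpoint parallel tangent locus is worked out. You instead verify the standard analytic criterion $(\x\circ\gamma)''(r_0)\wedge(\x\circ\gamma)'''(r_0)\neq 0$ directly, exploiting affine arclength to force $\alpha'''\parallel\alpha'$, $\beta'''\parallel\beta'$ and to make the $v^\perp$-components of $\alpha'',\beta''$ equal to $\pm 1/|v|^2$. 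I checked your identities (i)--(iii) and the resulting formula $(\x\circ\gamma)''(r_0)=\tfrac32\lambda'(r_0)v$; they are correct, as is the third-derivative analysis. Your approach buys self-containment at the cost of a page of computation; the paper's buys brevity but relies on a reference that may be hard to locate.
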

\begin{proof}
From condition (2) of lemma \ref{singularsingular}, we can use proposition 2.4.9 of \cite{Holtom00} to conclude that $\x(S)$ has an ordinary cusp at $(s_0,t_0)$.
From condition (3) of lemma \ref{singularsingular}, we can use proposition \ref{Kokubu}(b) to conclude that the germ of $q(s,t)$ at $q(s_0,t_0)$ is diffeomorphic to a swallowtail. 
\end{proof}

\section{Other symmetry sets and their corresponding points in the affine sphere}

Besides the area evolute, we consider here two more symmetry sets associated to a pair of planar curves, the AASS and the AESS.

\subsection{Self-intersections of the affine sphere and the affine area symmetry set}

The {\it affine area symmetry set (AASS)} is the locus of midpoints of at least two chords $\overline{\alpha(s_1)\beta(t_1)}$ and $\overline{\alpha(s_2)\beta(t_2)}$ that determine 
the same area with respect to a fixed
chord. We denote by $A$ the set of pairs $(s,t)$ such that $\x(s,t)$ belongs to the AASS.  It is easy to see that a point $\q=\q(s_1,t_1)=\q(s_2,t_2)$ is a point of self-intersection of the
improper affine sphere if and only if $(s_1,t_1)$ and $(s_2,t_2)$ are corresponding points of $A$.

One property of the AASS is that the tangent line to $\x(A)$ is parallel to $\alpha(s_1)-\alpha(s_2)$ and 
$\beta(t_1)-\beta(t_2)$. Moreover, its endpoints belong to the area evolute, in fact are singular points of the area evolute (\cite{Giblin08},\cite{Holtom00}). 
We conclude that the tangent line to the self-intersection curve of the affine sphere is contained in a vertical plane parallel to $\alpha(s_1)-\alpha(s_2)$
and $\beta(t_1)-\beta(t_2)$. Also, the endpoints of the self-intersection curve  belong to $\q(S)$ but are not regular singular points.

\subsection{Symmetries of the affine sphere and the affine envelope symmetry set}

The {\it affine envelope symmetry set (AESS)} is the locus of centers of $3+3$ conics, i.e., conics that have third order contact
 with $\alpha$ at $\alpha(s)$ and with $\beta$ at $\beta(t)$. 
We shall denote by $E\subset I_1\times I_2$ the set of pairs 
$(s,t)$ such that there exists a $3+3$ conic at $(\alpha(s),\beta(t))$ and  by $\c(E)$ the corresponding centers. 
The line $l(s,t)$ that passes through $\x(s,t)$ and the intersection of the tangent lines to the curves $\alpha$ at $\alpha(s)$ and $\beta$ at $\beta(t)$ is called the {\it midline}. 
If $\alpha'(s)$ is parallel to $\beta'(t)$, the midline is just the line through $\x(s,t)$ parallel to these vectors. 
One can prove that the midline $l(s,t)$ is always tangent to $\c(E)$ at a point $\c(s,t)$ (see \cite{Giblin98}).



For $(s,t)\in E$, the affine tangents to the 3+3 conic coincide with the affine tangents to $\alpha$ and $\beta$ at $\alpha(s)$ and $\beta(t)$. 
Consider the planar affine reflection $T$ in the direction of the chord $\overline{\alpha(s)\beta(t)}$ and fixing the midline $l(s,t)$. Then $T$ takes the affine tangent at $s$ to the affine tangent
at $t$. We can lift this planar affine reflection $T$ to a spatial affine reflection ${\overline T}$ in the direction of the tangent line of the level set of $g$ and fixing the
 vertical plane defined by the direction of the midline. Then ${\overline T}$ takes the asymptotic directions $(\q_s,\q_t)$ 
 to $(\q_t,\q_s)$. Thus it preserves the Blaschke metric at the point $\q(s,t)\in\q(E)$. Also, since $T$ preserves the affine tangents at $\alpha(s)$ and $\beta(t)$, ${\overline T}$ preserves the cubic form
 at $\q(s,t)$. Thus we may say that $\q(E)$ is the set of points  of {\it local affine symmetry} of the affine sphere.

If a pair $(s,t)$ belongs to $E\cap S$, then $\x(s,t)$ is necessarily a singular point of $\x(S)$ (\cite{Giblin08},\cite{Holtom00}). 
We conclude that if a point $q(s,t)\in\q(E)\cap\q(S)$, then it is not a regular singular point.

\section{Examples}

\begin{example}\label{ExCusp1}
Let
\begin{eqnarray*}
\alpha(s)&=&\left(    s,   s^2+s^3-1 \right) \ \ \ -\epsilon\leq s\leq \epsilon\\
\beta(t)&=&\left(     t,  1-t^2-t^3 \right) \ \ \ -\epsilon\leq t\leq \epsilon
\end{eqnarray*}
Then
\begin{eqnarray*}
\x&=&\frac{1}{2}\left(s+t, s^2+s^3-t^2-t^3  \right)\\
\n&=&\frac{1}{2}\left( 2-s^2-s^3-t^2-t^3, s-t  \right)
\end{eqnarray*}
and straightforward calculations shows that
$$
g(s,t)=2s+2t+\frac{s^3}{3}+\frac{s^4}{2}+\frac{t^3}{3}+\frac{t^4}{2}-st^2-ts^2-st^3-ts^3.
$$
Since $4\Omega(s,t)=2s+3s^2+2t+3t^2$, a singular point $(s,t)$ must satisfy 
\begin{equation}\label{stcircle}
\left(3s+1\right)^2+\left(3t+1\right)^2=2. 
\end{equation}
Consider the parameterization 
\begin{equation*}
\left(s,t\right)=\left( -\frac{1}{3}+\frac{\sqrt{2}}{3}\cos(r), -\frac{1}{3}+\frac{\sqrt{2}}{3}\sin(r)\right)
\end{equation*}
of $S$. Since $a(s)=(2+6s)^{1/3}$ and $b(t)=-(2+6t)^{1/3}$, we obtain
$$
\lambda(r)=\tan^{1/3}(r).
$$
If $r_0\neq\frac{\pi}{4}$, $\lambda(r_0)\neq 1$ and thus corollary \ref{cuspidal} implies that $\x(r_0)$ is a regular point of $\x(S)$ and $\q(r_0)$ is a cuspidal edge of $\q$. 
For $r_0=\frac{\pi}{4}$, $\lambda(r_0)=1$ and $\lambda'(r_0)\neq 0$. 
We conclude from corollary \ref{swallow} that $\x=(0,0)$ is an ordinary cusp of $\x(S)$ and the germ of $\q$ at $(s,t)=(0,0)$ is diffeomorphic to a swallowtail (see figure \ref{Cusp1}).
One can also verify without much difficulty that, close to $(s,t)=(0,0)$, the AESS is the positive $x$-axis while the AASS is the negative $x$-axis. Thus the self-intersection set of the improper affine sphere projects into the negative $x$-axis and the local symmetry set $q(E)$ projects into the positive $x$-axis.

\begin{figure}[htb]
\centering \fsep \subfigure[ An ordinary cusp of $\x(S)$ at $(0,0)$.] {
\includegraphics[width=.25
\linewidth,clip =false]{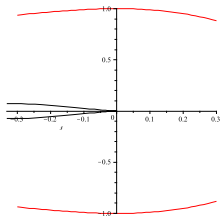}} \fsep\subfigure[
A swallowtail of $\q$ corresponding to the same singularity. ] {
\includegraphics[width=.40\linewidth,clip
=false]{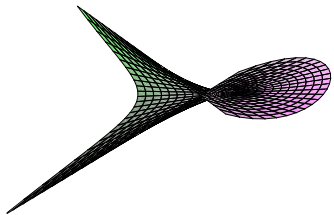}}\fsep
\caption{Singularities of example \ref{ExCusp1}.}
\label{Cusp1}
\end{figure}

\end{example}

\begin{example}\label{ExCusp2}
This example was considered in \cite{Nakajo09}. Let
\begin{eqnarray*}
\x(u,v)&=&\left(  u^2+v^2-u^3-3uv^2,     2uv+3u^2v+v^3\right)\\
\n(u,v)&=&\left( u^2+v^2+u^3+3uv^2,   -2uv+3u^2v+v^3\right).
\end{eqnarray*}
Changing to asymptotic coordinates $s=u+v, t=u-v$ we obtain
\begin{eqnarray*}
\x+\n^{\perp}&=&\alpha(s)=\left(    s^2 - s^3,   s^2+s^3 \right) \\
\x-\n^{\perp}&=&\beta(t)=\left(     t^2-t^3,  -t^2-t^3 \right).
\end{eqnarray*}
Then straightforward calculations shows that
\begin{eqnarray*}
\x&=&\frac{1}{2}\left(s^2-s^3+t^2-t^3,s^2+s^3-t^2-t^3\right)\\
g&=&  \frac{1}{2}\left(  s^2t^2-s^3t^3-\frac{s^5+t^5}{5}        \right) .
\end{eqnarray*}
Also $2\Omega(s,t)=-st(4-9st)$ and so the singularity set is given by $st=0$ and $9st=4$. 
The singular set corresponding to $9st=4$ can be parameterized by $\gamma_1(r)=\frac{2}{3}( r,r^{-1}), r>0$ and $\gamma_2(r)=\frac{2}{3}( r, r^{-1}), r<0$. Since 
$a(s)=12^{1/3}s^{2/3}$ and $b(t)=-12^{1/3}t^{2/3}$, straightforward calculations shows that $\lambda(r)=-r^{5/3}$, $r\neq 0$. 
So $\lambda(r_0)=1$ if and only if $r_0=-1$, and thus every point different from $s=t=-\frac{2}{3}$ corresponds to  a regular point of $\x(S)$ and a cuspidal edge
of $\q$. On the other hand, since $\lambda'(-1)\neq 0$, $s=t=-\frac{2}{3}$ corresponds to a cusp of $\x(S)$ and the germ of $q$ at this point is diffeomorphic to a swallowtail (see figure \ref{Cusp2}).

\begin{figure}[htb]
\centering \fsep \subfigure[ An ordinary cusp of $\x(S)$ together with the original curves $\alpha$ and $\beta$.] {
\includegraphics[width=.25
\linewidth,clip =false]{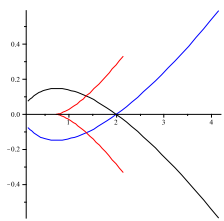}} \fsep\subfigure[
A swallowtail of $\q$ corresponding to the same singularity. ] {
\includegraphics[width=.40\linewidth,clip
=false]{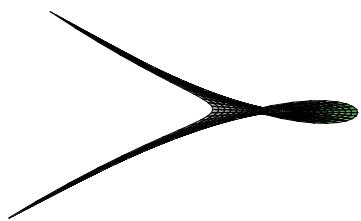}}\fsep
\caption{Singularities of example \ref{ExCusp2}.}
\label{Cusp2}
\end{figure}

\end{example}

\end{document}